\documentclass[12pt,reqno]{amsart}

\usepackage[l2tabu,orthodox]{nag}

\usepackage{hyperref}

\usepackage{enumerate}

\usepackage[swedish,english]{babel}
\usepackage[T1]{fontenc}
\usepackage[latin1]{inputenc}

\usepackage{a4wide}

\usepackage{amsmath,amssymb,amsthm}

\theoremstyle{plain}
\newtheorem{thm}{Theorem}[section]
\newtheorem{lem}[thm]{Lemma}
\newtheorem{prop}[thm]{Proposition}
\newtheorem{cor}[thm]{Corollary}
\theoremstyle{definition}

\newtheorem{exmp}[thm]{Example}

\theoremstyle{remark}
\newtheorem{rem}[thm]{Remark}

\theoremstyle{plain}
\newtheorem{mainthm}{Theorem}

\newcommand{\C}{\mathbb{C}}
\newcommand{\K}{\mathbb{K}}

\DeclareMathOperator{\End}{End}

\DeclareMathOperator{\GL}{GL}
\DeclareMathOperator{\im}{im}
\DeclareMathOperator{\rank}{rank}

\title[Invertibility of structured perturbations of singular matrices]{Invertibility of structured perturbations \\ of singular matrices over unital rings}
\date{\today}

\begin{document}

\author{Johan \"{O}inert}
\address{
Department of Mathematics and Natural Sciences,
Blekinge Institute of Technology,
SE-37179 Karlskrona, Sweden \\
and \\
Department of Engineering, 
University of Sk\"{o}vde, 
SE-54128 Sk\"{o}vde, Sweden}
\email{johan.oinert@bth.se}

\subjclass[2020]{15A09, 16S50, 16U60}
\keywords{matrix inversion, singular matrix, sum of matrices, matrix over a ring, stably finite ring}

\begin{abstract}
In this article, we study the invertibility of matrices of the form $A+EDF$, where $A$ and $D$ are square matrices over a unital, not necessarily commutative, ring and $A$ is singular. 
Under natural hypotheses on kernels and images, 
we prove that $A+EDF$ is invertible if and only if $D$ is invertible,
and obtain an explicit formula for the inverse of $A+EDF$ by
purely algebraic methods.
When the coefficient ring is stably finite, or more specifically a field, 
we obtain an invertibility criterion under weaker hypotheses. 
Our results extend and sharpen a theorem of Eriksson and Nordqvist for complex matrices.
\end{abstract}

\maketitle


\section{Introduction}\label{Sec:Introduction}

The problem of determining the inverse of a 
sum of matrices is classical (see e.g. \cite{HendersonSearle1981,Miller1981}).
For an invertible base matrix $A$,
the standard starting point 
is the 
Sherman--Morrison--Woodbury formula \cite{ShermanMorrison1950,Woodbury1950}
\begin{equation}\label{eq:ShermanMorrisonWoodbury}
		 (A+UCV)^{-1}
 =A^{-1}-A^{-1}U\bigl(C^{-1}+VA^{-1}U\bigr)^{-1}VA^{-1},
\end{equation}
whenever the indicated inverses exist. This identity, together with its rank-one special case, reduces the inversion of a modified matrix to that of the original matrix and a smaller auxiliary matrix. It is therefore fundamental in inverse-updating procedures (see e.g.
\cite{Hager1989} and the references therein).

Inverse-updating formulas occur throughout applied mathematics.
They are used in e.g. statistical estimation and covariance calculations \cite{HendersonSearle1981,Anderson1976}, optimization \cite{DennisMore1977}, network analysis \cite{AlsacStottTinney1983}, structural reanalysis \cite{AkgunGarcelonHaftka2001}, and numerical methods for partial differential equations \cite{ProskurowskiWidlund1976}. Their computational value is especially clear when a large matrix is altered by a perturbation of small rank. Singular base matrices also arise naturally, 
and in that setting Riedel \cite{Riedel1992} has studied rank-augmenting modifications and obtained a Sherman-Morrison-Woodbury-type identity for the Moore-Penrose inverse.
For rank-additive sums, Fill and Fishkind \cite{FillFishkind2000} obtained a formula expressing the Moore-Penrose inverse of the sum in terms of the generalized inverses of its summands.

When the base matrix $A$ is singular, the usual Sherman--Morrison--Woodbury formula \eqref{eq:ShermanMorrisonWoodbury} is not applicable. 
Motivated by this, Eriksson and Nordqvist 
\cite[Thm.~2.1]{ErikssonNordqvist2024}
considered complex matrices of the form
\begin{displaymath}
	A+EDF,
\end{displaymath}
where $A\in M_n(\mathbb C)$ is singular of rank $n-k>0$, $D\in M_k(\mathbb C)$ is invertible, $E\in M_{n\times k}(\mathbb C)$, and $F\in M_{k \times n}(\mathbb C)$.
Under the assumptions that the columns of $A$ together with those of $E$ span $\mathbb C^n$, and that the rows of $A$ together with those of $F$ span $\mathbb C^n$ (cf.~Remark~\ref{rem:Complex}), they obtain an explicit inverse of $A+EDF$. 
Their 
proof
uses finite-dimensional Hilbert space tools, in particular adjoints, orthogonal complements, and singular value decomposition.
They also give an expression for the inverse that does not require singular value decomposition.

The assumptions in their complex setting have natural module-theoretic interpretations, suggesting that rank,  
orthogonality, and singular value decomposition are not essential to the underlying inversion mechanism.
The appropriate hypotheses concern kernels, images, direct sum decompositions, and module isomorphisms.
In this article, we develop this point of view over an arbitrary unital ring.

Throughout this article, unless stated otherwise, let $R$ be a nonzero unital and associative, but not necessarily commutative, ring.
For $n,k>0$, regard $R^n$ and $R^k$ as free right $R$-modules. Matrices act on column vectors by left multiplication, and we identify a matrix with the corresponding homomorphism of right $R$-modules. Thus it is meaningful to speak of the kernel, image, injectivity, and surjectivity of a matrix. We write $M_{n\times k}(R)$ for the set of $n\times k$ matrices with entries from $R$, $M_n(R):=M_{n\times n}(R)$, $I_n$ for the identity matrix in $M_n(R)$, and $\GL_n(R)$ for the group of multiplication invertible matrices in $M_n(R)$.

Our first main result 
gives both an invertibility criterion and an explicit inversion formula for $A+EDF$ over an arbitrary unital ring.

\begin{mainthm}\label{thm:Main}
Let $n\geq k \geq 1$ be integers, and let
$A\in M_n(R)$,
$E\in M_{n\times k}(R)$,
$D\in M_k(R)$,
and $F\in M_{k\times n}(R).$
Suppose that the following three conditions hold:
\begin{enumerate}
\item[{\rm (P1)}] The restriction
$F|_{\ker(A)}:\ker(A) \to R^k$ 
is an isomorphism of right $R$-modules.

\item[{\rm (P2)}] $R^n=\im(A) \oplus \im(E).$

\item[{\rm (P3)}] $E$ is injective.
\end{enumerate}

\noindent 
Then $A+EDF \in \GL_n(R)$ if and only if $D \in \GL_k(R)$.
Moreover, if $D\in \GL_k(R)$, then $(A+EDF)^{-1}=G+XD^{-1}Y,$
where $G,X,Y$ are the matrices constructed in Lemma~\ref{lem:existenceGXY}.
\end{mainthm}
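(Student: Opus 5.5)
The proof rests on four auxiliary matrices $G\in M_n(R)$, $X\in M_{n\times k}(R)$, $Y\in M_{k\times n}(R)$ with the relations
\begin{displaymath}
FX=I_k,\quad AX=0,\quad YE=I_k,\quad YA=0,\quad GA=I_n-XF,\quad AG=I_n-EY,\quad GE=0,\quad FG=0.
\end{displaymath}
I expect these to be what Lemma~\ref{lem:existenceGXY} provides; if it does not, I would build them from (P1)--(P3). Once they are available, both directions of the theorem are short computations.

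The construction goes as follows.
\begin{itemize}
\item By (P1), let $X$ be the inclusion $\ker(A)\hookrightarrow R^n$ composed with $(F|_{\ker(A)})^{-1}$. Then $FX=I_k$ and $AX=0$.
\item The idempotent $XF$ has image $\ker(A)$ and kernel $\ker(F)$. Hence $R^n=\ker(A)\oplus\ker(F)$.
\item It follows that $A|_{\ker(F)}:\ker(F)\to\im(A)$ is an isomorphism. It is injective because $\ker(A)\cap\ker(F)=0$, and surjective because $A(R^n)=A(\ker(F))$.
\item By (P3), $E:R^k\to\im(E)$ is an isomorphism. Using the decomposition (P2), let $Y$ be the projection onto $\im(E)$ along $\im(A)$, followed by $E^{-1}$. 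Then $YE=I_k$ and $YA=0$.
\item Let $G$ be the projection onto $\im(A)$ along $\im(E)$, followed by $(A|_{\ker(F)})^{-1}$ and the inclusion $\ker(F)\hookrightarrow R^n$. Then $GE=0$ and $FG=0$.
\item The relation $AG=I_n-EY$ holds because $EY$ is the complementary projection onto $\im(E)$.
\item The relation $GA=I_n-XF$ holds because $GA$ is the identity on $\ker(F)$ and zero on $\ker(A)$.
\end{itemize}
All of these maps are right $R$-module homomorphisms between free modules, so they are matrices.

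For the ``if'' direction, assume $D\in\GL_k(R)$ and expand the two products using the relations:
\begin{displaymath}
(A+EDF)(G+XD^{-1}Y)=(I_n-EY)+0+0+EDFXD^{-1}Y=I_n-EY+EY=I_n,
\end{displaymath}
\begin{displaymath}
(G+XD^{-1}Y)(A+EDF)=(I_n-XF)+0+0+XD^{-1}YEDF=I_n-XF+XF=I_n.
\end{displaymath}
This gives invertibility of $A+EDF$ together with the stated formula for its inverse.

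For the ``only if'' direction, set $M=A+EDF$ and assume $M\in\GL_n(R)$. Then $MX=AX+EDFX=ED$ and $YM=YA+YEDF=DF$. Hence $M^{-1}ED=X$ and $DFM^{-1}=Y$. Multiplying by $F$ on the left, respectively by $E$ on the right, gives
\begin{displaymath}
(FM^{-1}E)D=FX=I_k,\qquad D(FM^{-1}E)=YE=I_k.
\end{displaymath}
So $D$ is invertible with two-sided inverse $FM^{-1}E$. This argument deliberately produces both one-sided inverses explicitly, so no finiteness assumption on $R$ is needed.

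The main obstacle is the construction of $G$, specifically showing that $A|_{\ker(F)}$ is an isomorphism onto $\im(A)$. This is where (P1) is used in full strength: injectivity of $F$ on $\ker(A)$ gives $\ker(A)\cap\ker(F)=0$, and surjectivity of $F|_{\ker(A)}$ gives $R^n=\ker(A)+\ker(F)$.
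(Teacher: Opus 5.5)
Your proof is correct, and your construction of $G,X,Y$ and the ``if'' computation coincide with Lemma~\ref{lem:existenceGXY} and the first half of the paper's proof. Your ``only if'' argument, however, takes a genuinely different route. The paper applies the ``if'' part with $D=I_k$ to see that $T_1=A+EF$ is invertible. It then writes $A+EDF=(A+EF)\bigl(I_n+X(D-I_k)F\bigr)$ and invokes the rectangular Jacobson lemma (Lemma~\ref{lem:JacobsonRectangular}(a)) to pass from invertibility of $I_n+X(D-I_k)F$ to that of $I_k+(D-I_k)FX=D$. You instead read off $M^{-1}ED=X$ and $DFM^{-1}=Y$, and conclude directly that $FM^{-1}E$ is a two-sided inverse of $D$. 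Your route is shorter and self-contained. It needs neither Jacobson's lemma, nor $G$, nor the invertibility of $A+EF$, since it uses only the four relations $AX=0$, $FX=I_k$, $YA=0$, $YE=I_k$. It also yields the explicit identity $D^{-1}=F(A+EDF)^{-1}E$, which is consistent with $F(G+XD^{-1}Y)E=D^{-1}$. The paper's route, in exchange, isolates the factorization $A+EDF=(A+EF)\bigl(I_n+X(D-I_k)F\bigr)$, which requires only $X$. Combined with Lemma~\ref{lem:JacobsonRectangular}(b), it immediately gives the determinant identity of Proposition~\ref{prop:determinant} over commutative rings.
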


The proof is entirely algebraic. No determinant, adjoint, inner product, or singular value decomposition is used.

For arbitrary rings, the injectivity assumption on $E$ is essential: one-sided invertibility need not imply two-sided invertibility, and surjective endomorphisms of finitely generated free modules need not be injective. A natural setting in which the relevant one-sided obstruction disappears is that of stably finite rings. Recall that $R$ is \emph{stably finite} if $M_m(R)$ is directly finite for every $m\geq 1$, i.e. for all $m\geq 1$ and $X,Y \in M_m(R)$, $XY=I_m$ implies $YX=I_m$. 
In that setting, the hypotheses and conclusion admit a sharper formulation,
leading to our second main result.

\begin{mainthm}\label{thm:SecondMain}
Let $R$ be a stably finite ring,
let $n\geq k \geq 1$ be integers, and let
$A\in M_n(R)$,
$E\in M_{n\times k}(R)$,
$D\in M_k(R)$,
and $F\in M_{k\times n}(R).$
Suppose that the following condition holds:
\begin{enumerate}

\item[{\rm (S1)}] The restriction
$F|_{\ker(A)}:\ker(A) \to R^k$ 
is a surjective right $R$-module homomorphism.

\end{enumerate}

\noindent Then $A+EDF \in \GL_n(R)$
if and only if the following two assertions hold:
\begin{enumerate}[{\rm (i)}]
	\item $D \in \GL_k(R)$.
	\item $R^n=\im(A) + \im(E)$.
\end{enumerate}
Moreover, when it exists, the inverse of
$A+EDF$ is given by 
$(A+EDF)^{-1}=G+XD^{-1}Y,$
where $G,X,Y$ are the matrices constructed in Lemma~\ref{lem:existenceGXY}.
\end{mainthm}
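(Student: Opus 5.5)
Our plan is to reduce Theorem~\ref{thm:SecondMain} to Theorem~\ref{thm:Main} by showing that, over a stably finite ring, (S1) together with (ii) forces (P1)--(P3), and conversely that invertibility of $A+EDF$ forces (ii) and (i). We will use throughout that over a stably finite ring every surjective endomorphism of $R^m$ is an isomorphism. Indeed, a surjection onto a free module splits, so $PQ=I_m$ for some $Q$, and stable finiteness gives $QP=I_m$. More generally, a surjection $R^m\to R^m\oplus N$ forces $N=0$. We will also use that stably finite rings have IBN, so that $R^a\cong R^b$ implies $a=b$.

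\textbf{Necessity.} Suppose $M:=A+EDF\in\GL_n(R)$. Since $M=A+E(DF)$, we have $\im(M)\subseteq\im(A)+\im(E)$, which gives (ii). For (i), we will show that $D$ is surjective and then invoke stable finiteness. Let $v\in R^k$. By (S1) there is $x\in\ker(A)$ with $Fx=v$, and so $Mx=EDv$. This alone does not give surjectivity of $D$, so we argue with sizes instead. Let $K=\ker(A)$. Then $M|_K=EDF|_K$ is injective, and by (S1) $F|_K$ maps onto $R^k$, so $K\to R^k\to R^k\to R^n$ is injective. Hence $F|_K$ is injective, so $F|_K$ is an isomorphism and $K\cong R^k$. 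Then $DF|_K$ is injective, so $D$ is injective, but injectivity does not imply invertibility over a stably finite ring. A cleaner route is to pass to $R^n/MK$. Since $M$ is an isomorphism, $M$ induces $R^n/K\cong R^n/M(K)$, and $R^n/K\cong\im(A)$. It is easier still to build a left inverse for $D$ directly. Take $\Phi=(F|_K)^{-1}$, so that $M\Phi=ED$, and hence $M^{-1}ED=\Phi$. Applying $F$ gives $(FM^{-1}E)D=F\Phi=I_k$. Stable finiteness then yields $D(FM^{-1}E)=I_k$, so $D\in\GL_k(R)$.

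\textbf{Sufficiency.} Assume (S1), (i) and (ii). By Theorem~\ref{thm:Main} it suffices to establish (P1)--(P3). We aim first for the surjection $\pi:R^n\oplus R^k\to R^n$, $(x,y)\mapsto Ax+Ey$, which is onto by (ii). Combined with $F|_K$, we want to produce a surjective endomorphism of $R^{n+k}$ whose bijectivity yields that $E$ is injective, $\im(A)\cap\im(E)=0$, and $F|_K$ is injective. The candidate is $T:R^n\oplus R^k\to R^n\oplus R^k$, $T(x,y)=(Ax+Ey,\,Fx)$, which is the block matrix $\begin{pmatrix}A&E\\F&0\end{pmatrix}$. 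To see that $T$ is onto, take $(u,v)$. Write $u=Ax+Ey$. Choose $z\in K$ with $Fz=v-Fx$. Then $T(x+z,y)=(u,v)$. By stable finiteness $T$ is bijective.

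Injectivity of $T$ then gives the three conditions in turn. If $Ey=0$, then $T(0,y)=0$, so $y=0$, which is (P3). If $z\in K$ with $Fz=0$, then $T(z,0)=0$, which is (P1). If $Ax=Ey$, then $T(x,-y)=(0,Fx)$. Pick $z\in K$ with $Fz=Fx$; then $T(x-z,-y)=0$, so $y=0$, and therefore $\im(A)\cap\im(E)=0$, which is (P2). Theorem~\ref{thm:Main} now applies and gives the inverse $G+XD^{-1}Y$. The main obstacle is finding this bordered matrix $T$; once it is in place, every step is routine.
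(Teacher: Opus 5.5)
Your proof is correct, but it is organized differently from the paper's. The paper isolates the reduction in Lemma~\ref{lem:dependencies}. There, a section $X$ of $F|_{\ker(A)}$ is chosen, and the matrix $T=A+EF$ satisfies $TX=E$ and $T(I_n-XF)=A$. Hence $T$ is surjective and therefore invertible in $M_n(R)$, and (P1)--(P3) are read off from these two identities. Both directions of Theorem~\ref{thm:SecondMain} then follow at once from Theorem~\ref{thm:Main}. In particular, the paper gets necessity of (i) from the factorization $A+EDF=(A+EF)(I_n+X(D-I_k)F)$ and the rectangular Jacobson lemma.

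You do two things differently.
\begin{itemize}
\item For necessity of (i) you bypass (P1)--(P3) entirely. From $M\Phi=ED$ you get the explicit left inverse $FM^{-1}E$ of $D$ and use direct finiteness of $M_k(R)$. This is shorter, and it yields the identity $D^{-1}=F(A+EDF)^{-1}E$, consistent with $F(G+XD^{-1}Y)E=D^{-1}$.
\item For sufficiency you replace $A+EF$ by the bordered matrix $\begin{bmatrix} A & E\\ F & 0\end{bmatrix}\in M_{n+k}(R)$, whose injectivity gives (P1)--(P3) uniformly. This is a natural object: the eight identities of Lemma~\ref{lem:existenceGXY} say exactly that its two-sided inverse is $\begin{bmatrix} G & X\\ Y & 0\end{bmatrix}$.
\end{itemize}
In return, the paper's route is more economical in the finiteness hypothesis. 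It only ever uses that $M_n(R)$ is directly finite, which also covers $M_k(R)$ since $k\le n$. Your sufficiency step needs $M_{n+k}(R)$ to be directly finite. Under the stated assumption of full stable finiteness this makes no difference.

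On presentation, four small fixes:
\begin{itemize}
\item Delete the abandoned attempts in your necessity paragraph (the size argument and the quotient by $M(K)$).
\item Delete the unused preliminary facts about IBN and surjections onto $R^m\oplus N$.
\item Note that $\Phi$ can be any section of $F|_{\ker(A)}$, so proving injectivity of $F|_{\ker(A)}$ there is unnecessary.
\item Say explicitly that, in the ``only if'' direction, the inversion formula holds because (i) and (ii), once established, feed into your sufficiency argument to give (P1)--(P3).
\end{itemize}
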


Stable finiteness of $R$ is in fact necessary for the equivalence in Theorem~\ref{thm:SecondMain}
to hold for all matrix sizes and all matrices satisfying condition (S1) (see Remark~\ref{rem:SF}).

This article is organized as follows.
In Section~\ref{Sec:ProofA}, we construct the matrices $G$, $X$, and $Y$ and prove Theorem~\ref{thm:Main}.
Along the way, we record short proofs of the rectangular forms of Jacobson's lemma and Sylvester's determinant identity (see Lemma~\ref{lem:JacobsonRectangular}).
We also establish another determinant identity 
(see Proposition~\ref{prop:determinant}).
Section~\ref{Sec:Examples} provides examples showing that each of the hypotheses in Theorem~\ref{thm:Main} is needed over arbitrary rings.
In Section~\ref{Sec:ProofB}, we prove the stable finiteness result, Theorem~\ref{thm:SecondMain}.
Finally, in Section~\ref{Sec:Field} we specialize the theory to 
matrices with entries in a field
and 
prove Corollary~\ref{cor:Field} (see also Remark~\ref{rem:Complex}).
Thereby we simultaneously generalize, recover and sharpen 
the main result of Eriksson and Nordqvist \cite[Thm.~2.1]{ErikssonNordqvist2024}.

\section{Proof of Theorem~\ref{thm:Main}}\label{Sec:ProofA}

In this section, we establish our first main result, Theorem~\ref{thm:Main}.
We begin by deriving the existence of matrices $G,X,Y$ with certain prescribed properties.
The following lemma gives a module-theoretic construction of the relations appearing over $\C$ in \cite[Cor.~1]{ErikssonNordqvist2024}.

\begin{lem}\label{lem:existenceGXY}
Let
$A\in M_n(R)$,
$E\in M_{n\times k}(R)$,
and 
$F\in M_{k\times n}(R)$.	
Suppose that the following three conditions hold:
\begin{enumerate}
\item[{\rm (P1)}] The restriction
$F|_{\ker(A)}:\ker(A) \to R^k$ 
is an isomorphism of right $R$-modules.

\item[{\rm (P2)}] $R^n=\im(A) \oplus \im(E).$

\item[{\rm (P3)}] $E$ is injective.
\end{enumerate}
Then there exist matrices
$G\in M_n(R)$,
$X\in M_{n\times k}(R)$, and
$Y\in M_{k\times n}(R)$
such that
\begin{displaymath}
	\begin{array}{lllllll}
		AX=0, & \quad & GE=0, & \quad & YE=I_k, & \quad & AG+EY=I_n,\\
		&&&&&& \\
YA=0, & \quad & FG=0, & \quad & FX=I_k, & \quad & GA+XF=I_n.
	\end{array}
\end{displaymath}
\end{lem}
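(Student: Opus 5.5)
The plan is to build $G$, $X$, $Y$ as module homomorphisms first and then read them off as matrices. This is legitimate because every right $R$-module homomorphism $R^m\to R^p$ is left multiplication by a unique $p\times m$ matrix; every identity we need is an equality of homomorphisms between free modules, so it is enough to prove it on elements.

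\emph{Construction of $X$ and $Y$.} By (P1), $F|_{\ker(A)}$ has an inverse $\sigma:R^k\to\ker(A)$. Composing $\sigma$ with the inclusion $\ker(A)\hookrightarrow R^n$ gives a matrix $X\in M_{n\times k}(R)$. By construction $AX=0$ and $FX=I_k$. By (P2) and (P3), $E$ is an isomorphism $R^k\to\im(E)$, and $\im(E)$ is a direct summand of $R^n$. Let $\pi:R^n\to\im(E)$ be the projection along $\im(A)$, and set $Y:=E^{-1}\circ\pi$, viewed as a matrix in $M_{k\times n}(R)$. Then $YE=I_k$ because $\pi$ fixes $\im(E)$, and $YA=0$ because $\pi$ kills $\im(A)$. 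Moreover $EY=\pi$ is the projection onto $\im(E)$, so $I_n-EY$ is the projection onto $\im(A)$ along $\im(E)$.

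\emph{Construction of $G$.} This is the main step. We need a single $G$ with $GE=0$, $FG=0$, $AG=I_n-EY$ and $GA=I_n-XF$.

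First, $\ker(F)$ is a complement of $\ker(A)$ in $R^n$. Indeed, for $v\in R^n$ write $v=(v-XFv)+XFv$. Here $XFv\in\ker(A)$, and $F(v-XFv)=Fv-Fv=0$. The intersection is zero because $F$ is injective on $\ker(A)$. So $XF$ is the projection onto $\ker(A)$ along $\ker(F)$.

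It follows that $A$ restricts to an isomorphism $\alpha:\ker(F)\to\im(A)$. It is injective because $\ker(F)\cap\ker(A)=0$, and surjective because $A(R^n)=A(\ker(F))$. Define
\[
G:=\iota\circ\alpha^{-1}\circ(I_n-EY),
\]
where $\iota:\ker(F)\hookrightarrow R^n$ is the inclusion.

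We then verify the four identities.
\begin{itemize}
\item $FG=0$, since $\im(G)\subseteq\ker(F)$.
\item $GE=0$, since $(I_n-EY)E=E-E=0$.
\item $AG=I_n-EY$, since $A\alpha^{-1}$ is the identity on $\im(A)$.
\item $GA=I_n-XF$. Note that $(I_n-EY)A=A$, so $GA=\iota\alpha^{-1}A$. For $v\in R^n$, split $v=u+w$ with $u\in\ker(F)$ and $w\in\ker(A)$. Then $\alpha^{-1}Av=\alpha^{-1}Au=u=(I_n-XF)v$.
\end{itemize}

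The only real obstacle is showing that $\ker(F)$ complements $\ker(A)$ and that $\alpha$ is an isomorphism. Once these are in place, all eight relations follow by direct computation.
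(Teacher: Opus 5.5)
Your proposal is correct and follows essentially the same route as the paper. You take $X$ as the inverse of $F|_{\ker(A)}$, establish $R^n=\ker(F)\oplus\ker(A)$ so that $A$ restricts to an isomorphism $\ker(F)\to\im(A)$, and define $Y$ and $G$ through the decomposition $R^n=\im(A)\oplus\im(E)$; your $G=\iota\alpha^{-1}(I_n-EY)$ is exactly the paper's map $a+Ec\mapsto A_0^{-1}a$, and your verifications of the eight identities amount to the paper's complementary-projection arguments.
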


\begin{proof}
Using (P1), let $X:R^k\to\ker(A)\subseteq R^n$ be the inverse of $F|_{\ker(A)}$.
Then $AX=0$ and $FX=I_k$.

We claim that $R^n = \ker(F) \oplus \ker(A)$.
Clearly, each $v\in R^n$ can be written as $v = (v-XFv)+XFv$.
Note that $XFv \in \im(X)=\ker(A)$, and $v-XFv \in \ker(F)$ because $F(v-XFv)=Fv-FXFv=Fv-I_k Fv=0$.
Hence, $R^n = \ker(F) + \ker(A)$.
Take $w\in \ker(F) \cap \ker(A)$.
Then $XFw=w$ because $w\in \ker(A)$, and $Fw=0$ because $w\in \ker(F)$. Thus, $w=0$.
This proves the claim.

Using the above direct sum decomposition, it is easy to see that
the restriction
$A_0:=A|_{\ker(F)}:\ker(F)\to \im(A)$
is both injective and surjective, and hence an isomorphism.

By (P2) and (P3), 
every $v\in R^n$ can be written as
$v=a+Ec$ for unique elements $a\in \im(A)$ and $c\in R^k$.
Define $G : R^n \to R^n$ by
$G(a+Ec):=A_0^{-1}a.$
Then $FG=0$ and $GE=0$.
Also, define $Y : R^n \to R^k$ by $Y(a+Ec):=c$. Then $YA=0$ and $YE=I_k$.

The maps $AG$ and $EY$ are the complementary projections onto $\im(A)$ and $\im(E)$.
Indeed, for $v=a+Ec$ as above, we have $AGv=a$ and $EYv=Ec$.
Similarly, writing $v\in R^n$ as $v=u+w$ with $u\in\ker(F)$ and $w\in\ker(A)$, we obtain
$GAv=A_0^{-1}(Au)=u$ and $XFv=XFw=w.$
Thus,
$GA$ and $XF$ are the complementary projections onto $\ker(F)$ and $\ker(A)$.
Hence, $AG+EY=I_n$ and $GA+XF=I_n$.
%
\end{proof}

\begin{rem}
The matrices in Lemma~\ref{lem:existenceGXY}
satisfy $AGA=A$ and $GAG=G$, i.e. $G$ is a 
\emph{reflexive generalized inverse of $A$}.
Indeed,
$AGA=(I_n-EY)A=A-EYA=A-0=A$
and
$GAG=G(I_n-EY)=G-GEY=G-0=G$.
\end{rem}

For a unital ring, \emph{Jacobson's lemma} 
asserts that
for elements $a,b$, the element $1-ab$ is invertible if and only if $1-ba$ is invertible 
(see e.g. \cite[pp.~162-166]{Halmos1982} or  \cite[pp.~3-4]{Lam2003}).
Below we record a rectangular version of Jacobson's lemma,
and the \emph{Sylvester determinant theorem} for matrices over a unital commutative ring.

\begin{lem}\label{lem:JacobsonRectangular}
Let $n, k$ be positive integers, and
let $U \in M_{n \times k}(R)$ and $V \in M_{k \times n}(R)$.
Then the following two assertions hold:
\begin{enumerate}[{\rm (a)}]
	\item $I_n + UV \in \GL_n(R)$ if and only if $I_k + VU \in \GL_k(R)$.
	\item If $R$ is commutative, then $\det(I_n + UV)=\det(I_k + VU)$.
\end{enumerate}
\end{lem}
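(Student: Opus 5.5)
For part (a) I will write down the explicit inverse formula; this also shows the argument is purely algebraic and needs no commutativity. Suppose $I_k+VU\in\GL_k(R)$ and set $W:=(I_k+VU)^{-1}$. I claim that
\begin{displaymath}
(I_n+UV)^{-1}=I_n-UWV.
\end{displaymath}
To check this, multiply on the right:
\begin{displaymath}
(I_n+UV)(I_n-UWV)=I_n+UV-UWV-UVUWV=I_n+UV-U(I_k+VU)WV=I_n+UV-UV=I_n.
\end{displaymath}
Multiplying on the left works the same way:
\begin{displaymath}
(I_n-UWV)(I_n+UV)=I_n+UV-UWV-UWVUV=I_n+UV-UW(I_k+VU)V=I_n.
\end{displaymath}
So $I_n+UV$ has a two-sided inverse. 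The converse follows by symmetry: swap the roles of $U,V$ and of $n,k$, and then $(I_k+VU)^{-1}=I_k-V(I_n+UV)^{-1}U$. The only care needed is to keep track of the order of factors. Both one-sided identities must be verified separately, since $R$ need not be directly finite.

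For part (b) I will use block matrices over the commutative ring $R$. Consider the $(n+k)\times(n+k)$ factorizations
\begin{displaymath}
\begin{pmatrix} I_n & -U\\ V & I_k\end{pmatrix}
=\begin{pmatrix} I_n & 0\\ V & I_k\end{pmatrix}\begin{pmatrix} I_n & -U\\ 0 & I_k+VU\end{pmatrix}
=\begin{pmatrix} I_n+UV & -U\\ 0 & I_k\end{pmatrix}\begin{pmatrix} I_n & 0\\ V & I_k\end{pmatrix}.
\end{displaymath}
Each of these is a routine block multiplication.

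Over a commutative ring, the determinant is multiplicative, and the determinant of a block-triangular matrix is the product of the determinants of its diagonal blocks. This holds over any commutative ring, for example via the Laplace expansion or the Leibniz formula. Taking determinants of the two factorizations therefore gives
\begin{displaymath}
\det(I_k+VU)=\det\begin{pmatrix} I_n & -U\\ V & I_k\end{pmatrix}=\det(I_n+UV),
\end{displaymath}
which is the claim.

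Neither step presents a real obstacle. The only points needing attention are verifying the block products with the correct signs, and remembering in (a) that both one-sided inverses must be checked.
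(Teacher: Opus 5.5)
Your proof is correct.

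For (b) you follow essentially the paper's route. You compute the determinant of a $2\times 2$ block matrix through two Schur-complement factorizations. The paper does the same thing by sandwiching $B=\begin{bmatrix} I_n & U\\ -V & I_k\end{bmatrix}$ between unitriangular matrices $A$ and $C$, so that $ABC$ and $CBA$ are block diagonal, and then using $\det(ABC)=\det(CBA)$. Your block-triangular version needs the block-triangular determinant rule rather than only the block-diagonal one, but that rule is standard over any commutative ring.

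For (a) your route is genuinely different. The paper derives (a) from the same block identity. Since $A$ and $C$ are invertible, $ABC$ is invertible iff $B$ is iff $CBA$ is, and a block-diagonal matrix with blocks $I_n$ and $M$ is invertible exactly when $M$ is; the paper leaves this last step implicit. You instead check directly that $I_n-U(I_k+VU)^{-1}V$ is a two-sided inverse of $I_n+UV$.

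What your approach gains is an explicit inverse formula. It is a Woodbury-type identity in the spirit of the paper's subject, and it shows transparently which one-sided identity of $W$ each verification uses. What the paper's approach gains is economy: a single block computation proves (a) and (b) together, with no formula to guess in advance.
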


\begin{proof}
Consider the following three block matrices in $M_{n+k}(R)$:
\begin{displaymath}
A:=
\begin{bmatrix}
		I_n & 0\\
		V   & I_k
\end{bmatrix},
\quad
B:=
\begin{bmatrix}
		I_n & U\\
		-V   & I_k
\end{bmatrix},
\quad \text{ and } \quad
C:=
\begin{bmatrix}
		I_n & -U\\
		0   & I_k
\end{bmatrix}.
\end{displaymath}
After directly calculating products of these block matrices, we get that
\begin{equation}
\label{eq:JacobsonBlocks}
ABC =
\begin{bmatrix}
		I_n & 0 \\
		0 & I_k+VU
\end{bmatrix}
\quad
\text{ and }
\quad
CBA =
\begin{bmatrix}
		I_n+UV & 0 \\
		0 & I_k
\end{bmatrix}.
\end{equation}

(a) 
A routine check confirms that $A,C \in \GL_{n+k}(R)$.
Thus, $ABC \in \GL_{n+k}(R)$ if and only if $B\in \GL_{n+k}(R)$ if and only if $CBA \in \GL_{n+k}(R)$.
Hence, by \eqref{eq:JacobsonBlocks},
$I_k + VU \in \GL_k(R)$
if and only if
$I_n + UV \in \GL_n(R)$.

(b) Suppose that $R$ is commutative.
Using \eqref{eq:JacobsonBlocks}
and the fact that $\det(CBA)=\det(ABC)$,
we get that $\det(I_n + UV)=\det(I_k + VU)$.
\end{proof}

We are now ready to prove our first main result.

\begin{proof}[Proof of Theorem~\ref{thm:Main}]
Choose matrices
$G\in M_n(R)$,
$X\in M_{n\times k}(R)$,
and
$Y\in M_{k\times n}(R)$
as in Lemma~\ref{lem:existenceGXY}.
Write $T_D:=A+EDF$ and $T_1:=A+EF$.

First, suppose that $D \in \GL_k(R)$. 
Using that $AX=0$, $FG=0$, $FX=I_k$,
and $AG+EY=I_n$,
we have
\begin{align*}
T_D(G+XD^{-1}Y) &= AG+AXD^{-1}Y+EDFG+EDFXD^{-1}Y \\
&= AG + 0 + 0 + EI_kY = I_n.
\end{align*}
Similarly,
using that $GE=0$, $YA=0$, $YE=I_k$, 
and $GA+XF=I_n$,
we have
\begin{align*}
(G+XD^{-1}Y)T_D &= GA+GEDF+XD^{-1}YA+XD^{-1}YEDF \\
&= GA + 0 + 0 + XI_kF = I_n.
\end{align*}
This shows that $T_D^{-1}=G+XD^{-1}Y$.
Thus, we have established the ``if'' statement.

Now, suppose that $T_D \in \GL_n(R)$.
Note that, by the above, $T_1 = T_{I_k} \in \GL_n(R)$.
Using that $AX=0$ and $FX=I_k$, we have
$T_1X=(A+EF)X=AX+EFX=E$, and hence
\begin{equation*}
		T_D=T_1\bigl(I_n+X(D-I_k)F\bigr).
\end{equation*}
As $T_1 \in \GL_n(R)$, we have $I_n +X(D-I_k)F \in \GL_n(R)$.
By Lemma~\ref{lem:JacobsonRectangular}(a), 
with $U:=X$ and $V:=(D-I_k)F$, we have
$D=I_k+(D-I_k)I_k=I_k+(D-I_k)FX \in \GL_k(R)$.
This establishes the ``only if'' statement.
The form for the inverse of $T_D$ follows from the calculation in the beginning of the proof.
\end{proof}

We record the following result that generalizes \cite[Lem.~1]{ErikssonNordqvist2024}.

\begin{prop}\label{prop:determinant}
Let $R$ be a commutative ring, let $n\geq k \geq 1$ be integers, and let
$A\in M_n(R)$,
$E\in M_{n\times k}(R)$,
$D\in M_k(R)$,
and $F\in M_{k\times n}(R).$
Suppose that there is $X\in M_{n \times k}(R)$ such that
$AX=0$ and $FX=I_k$.
Then
\begin{equation*}
 \det(A+EDF)=\det(A+EF)\det(D).
\end{equation*}
\end{prop}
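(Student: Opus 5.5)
The plan is to reuse the factorization from the proof of Theorem~\ref{thm:Main}, which only needed the relations $AX=0$ and $FX=I_k$. Write $T_D:=A+EDF$ and $T_1:=A+EF$. From $AX=0$ and $FX=I_k$ we get $T_1X=AX+EFX=E$. Substituting $E=T_1X$ gives
\begin{displaymath}
T_D = A+EF+E(D-I_k)F = T_1 + T_1X(D-I_k)F = T_1\bigl(I_n+X(D-I_k)F\bigr).
\end{displaymath}
This identity holds without any invertibility assumption on $T_1$ or $D$, because it is a purely formal consequence of $T_1X=E$.

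Since $R$ is commutative, the determinant is multiplicative on $M_n(R)$. Hence
\begin{displaymath}
\det(T_D)=\det(T_1)\det\bigl(I_n+X(D-I_k)F\bigr).
\end{displaymath}
Next, apply Lemma~\ref{lem:JacobsonRectangular}(b) with $U:=X\in M_{n\times k}(R)$ and $V:=(D-I_k)F\in M_{k\times n}(R)$. This gives
\begin{displaymath}
\det\bigl(I_n+X(D-I_k)F\bigr)=\det\bigl(I_k+(D-I_k)FX\bigr)=\det\bigl(I_k+(D-I_k)I_k\bigr)=\det(D),
\end{displaymath}
again using $FX=I_k$. Combining the two displays yields $\det(A+EDF)=\det(A+EF)\det(D)$.

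I do not expect a real obstacle here. The only point needing care is that the factorization uses nothing beyond $AX=0$ and $FX=I_k$. In particular, it does not need (P1)–(P3) or the invertibility of $T_1$, which in Theorem~\ref{thm:Main} was used only to transfer invertibility. So the argument goes through under the weaker hypothesis of the proposition, with multiplicativity of $\det$ over a commutative ring replacing the use of $\GL_n(R)$.
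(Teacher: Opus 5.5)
Your proposal is correct and follows the paper's proof essentially verbatim: the factorization $A+EDF=(A+EF)\bigl(I_n+X(D-I_k)F\bigr)$ from $(A+EF)X=E$, multiplicativity of $\det$, and Lemma~\ref{lem:JacobsonRectangular}(b) with $U:=X$, $V:=(D-I_k)F$. You are also right that only $AX=0$ and $FX=I_k$ are needed.
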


\begin{proof}
By assumption, $(A+EF)X=AX+EFX=0+E I_k = E$.
Hence, it follows that
$A+EDF=(A+EF)(I_n+X(D-I_k)F)$.
We now get that 
\begin{align*}
 \det(A+EDF)
 &=\det(A+EF)\det(I_n+X(D-I_k)F)\\
 &=\det(A+EF)\det(I_k+(D-I_k)FX)\\
 &=\det(A+EF)\det(I_k+(D-I_k) I_k) = \det(A+EF)\det(D),
\end{align*}
where we have used Lemma~\ref{lem:JacobsonRectangular}(b) in the second equality.
\end{proof}

\section{Examples}\label{Sec:Examples}

In this section, we present examples showing that the conclusion of Theorem~\ref{thm:Main} may fail
if any one of the conditions (P1), (P2) or (P3) is removed.

\begin{exmp}
Let $n:=2$ and $k:=1$. Define
$A:=
\begin{bmatrix}
1_R & 0\\
0 & 0
\end{bmatrix}$
and $D:=[1_R]\in \GL_1(R)$. 
Note that
$\ker(A)=
\left\{
\begin{bmatrix}
0\\
r
\end{bmatrix}: r\in R
\right\}$
and
$\im(A)=
\left\{
\begin{bmatrix} r\\ 0 \end{bmatrix}: r\in R
\right\}.$
We consider two cases:\\[-8pt]

(a)
Let
$E:=
\begin{bmatrix}
0\\
1_R
\end{bmatrix}$,
and
$F:=
\begin{bmatrix}
1_R & 0
\end{bmatrix}$.
Then $E\colon R\to R^2$ is injective. Moreover,
$\im(E)=
\left\{
\begin{bmatrix} 0\\ r \end{bmatrix}: r\in R
\right\},$
and hence
$R^2=\im(A)\oplus\im(E).$
Thus, conditions (P3) and (P2) in Theorem~\ref{thm:Main} hold. On the other hand,
$\ker(A) \subseteq \ker(F).$	
Therefore, $F|_{\ker(A)}\colon \ker(A)\to R$ is the zero map, and so condition
(P1) fails to hold.
Note that
$A+EDF = \begin{bmatrix}
1_R & 0\\
1_R & 0
\end{bmatrix}$
is not invertible.

(b)
Let $E:=
\begin{bmatrix}
1_R\\
0
\end{bmatrix}$,
and
$F:=
\begin{bmatrix}
0 & 1_R
\end{bmatrix}$.
Then $E\colon R\to R^2$ is injective. 
For every $r\in R$, we have 
$F
\begin{bmatrix}
0\\
r
\end{bmatrix}
=r,$	
and hence
$F|_{\ker(A)}\colon \ker(A)\to R$
is an isomorphism of right $R$-modules. 
Thus, conditions (P3) and (P1) hold.
On the other hand,
$\im(E)=
\left\{
\begin{bmatrix}
r\\
0
\end{bmatrix}: r\in R
\right\}.$
Hence, 
$\im(A)+\im(E)
=
\left\{
\begin{bmatrix}
r\\
0
\end{bmatrix}: r\in R
\right\}
\neq R^2.$	
Thus, condition (P2) fails to hold.
Note that
$A+EDF
=
\begin{bmatrix}
1_R & 1_R\\
0 & 0
\end{bmatrix}$
is not invertible.
\end{exmp}

\begin{exmp}
Let $\K$ be a field, let $V$ be a countably infinite-dimensional $\K$-vector
space with basis $\{e_0,e_1,e_2,\dots\}$ and set
$R:=\End_{\K}(V)$.
Define $S,L\in R$ by 
$S(e_i):=e_{i+1}$, $L(e_0):=0$, and $L(e_{i+1}):=e_i$.
Then
$LS=1_R$.
Thus, $L$ is right-invertible in $R$, so $LR=R$.
However, $L$ is not invertible
since $L$ is not injective as a $\K$-linear endomorphism of $V$.
Choose $P:=1_R - SL \in R$ and note that $P \neq 0$. 
Furthermore, $LP=L-LSL=0$.

Now choose $n:=2$ and $k:=1$.
Set
\begin{displaymath}
		A:=
\begin{bmatrix}
0 & 0\\
0 & 1_R
\end{bmatrix},
\qquad
E:=
\begin{bmatrix}
L\\
0
\end{bmatrix},
\qquad
D:=\begin{bmatrix} 1_R \end{bmatrix},
\qquad \text{ and }
\qquad
F:=\begin{bmatrix} 1_R & 0 \end{bmatrix}.
\end{displaymath}

Then
$\ker(A)=
\left\{
\begin{bmatrix}
r\\
0
\end{bmatrix}: r\in R
\right\}
$
and the restriction
$F|_{\ker(A)}:\ker(A)\to R$
is an isomorphism of right $R$-modules.
Note that
$\im(E)=
\left\{
\begin{bmatrix}
r \\
0
\end{bmatrix}: r \in R
\right\}
$
while
$\im(A)=
\left\{
\begin{bmatrix}
0\\
r
\end{bmatrix}: r\in R
\right\}$.
Hence,
$R^2=\im(A)\oplus\im(E)$.
We conclude that (P1) and (P2) in Theorem~\ref{thm:Main} hold, but (P3) does not hold because $EP=0$.
We also note that
$A+EDF =
\begin{bmatrix}
L & 0\\
0 & 1_R
\end{bmatrix}$
is not invertible, because $\begin{bmatrix}
P\\
0
\end{bmatrix} 
\in \ker(A+EDF)$.
\end{exmp}

\section{Proof of Theorem~\ref{thm:SecondMain}}\label{Sec:ProofB}

In this section, we focus on the case where $R$ is stably finite and prove our second main result, Theorem~\ref{thm:SecondMain}.

\begin{lem}\label{lem:dependencies}
Let $R$ be a stably finite ring,
let $n\geq k \geq 1$ be integers, and let
$A\in M_n(R)$,
$E\in M_{n\times k}(R)$,
and $F\in M_{k\times n}(R).$
Suppose that the restriction
$F|_{\ker(A)}:\ker(A) \to R^k$ 
is a surjective right $R$-module homomorphism,
and that $R^n=\im(A) + \im(E).$
Then the following three conditions hold:
\begin{enumerate}
\item[{\rm (P1)}] The restriction
$F|_{\ker(A)}:\ker(A) \to R^k$ 
is an isomorphism of right $R$-modules.

\item[{\rm (P2)}] $R^n=\im(A) \oplus \im(E).$

\item[{\rm (P3)}] $E$ is injective.
\end{enumerate}
\end{lem}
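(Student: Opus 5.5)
The plan is to reduce everything to the invertibility of the single matrix $T_1:=A+EF$, as in the proof of Theorem~\ref{thm:Main}, and then read off (P1)--(P3) from the injectivity of $T_1$.

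\emph{Step 1: construct $X$.} Since $F|_{\ker(A)}$ is surjective and $R^k$ is free, we lift each standard basis vector of $R^k$ to an element of $\ker(A)$. These lifts are the columns of a matrix $X\in M_{n\times k}(R)$ with $AX=0$ and $FX=I_k$. As in the proof of Theorem~\ref{thm:Main}, this gives
\[
T_1X=(A+EF)X=E .
\]

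\emph{Step 2: $T_1\in\GL_n(R)$.} From $T_1X=E$ we get $\im(E)\subseteq\im(T_1)$. For any $v$ we have $Av=T_1v-EFv$, so also $\im(A)\subseteq\im(T_1)$. The hypothesis $R^n=\im(A)+\im(E)$ then shows that $T_1$ is surjective. Choosing preimages of the standard basis vectors of $R^n$ gives $Z\in M_n(R)$ with $T_1Z=I_n$. Since $R$ is stably finite, $M_n(R)$ is directly finite, so $ZT_1=I_n$ and $T_1$ is invertible. This is the only place stable finiteness is used, and it is the key step; everything after it is bookkeeping with injectivity of $T_1$.

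\emph{Step 3: derive (P1)--(P3).}

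For (P3), suppose $Ec=0$. Then $T_1Xc=Ec=0$, so $Xc=0$, and hence $c=FXc=0$.

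For (P1), surjectivity is given, so it remains to check injectivity. If $w\in\ker(A)$ and $Fw=0$, then $T_1w=Aw+EFw=0$, so $w=0$.

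For (P2), it remains to show $\im(A)\cap\im(E)=0$. Suppose $Av=Ec$ and put $u:=v-XFv$. Then $Fu=Fv-FXFv=0$ and $Au=Av$, so
\[
T_1u=Au+EFu=Av=Ec=T_1Xc .
\]
Injectivity of $T_1$ gives $u=Xc$. Applying $F$ yields $0=Fu=FXc=c$, so $Av=Ec=0$. Together with $R^n=\im(A)+\im(E)$, this gives the direct sum decomposition.
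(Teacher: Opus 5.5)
Your proof is correct and follows essentially the same route as the paper. Both proofs build the section $X$ with $AX=0$ and $FX=I_k$, show that $A+EF$ is surjective and hence invertible by stable finiteness, and then read off (P1)--(P3) from its injectivity using $(A+EF)X=E$ and $(A+EF)(I_n-XF)=A$.
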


\begin{proof}
Since $R^k$ is free, the surjection $F|_{\ker(A)} : \ker(A) \to R^k$ admits a section $R^k \to \ker(A)$ which is a right $R$-module homomorphism.
Regard this section as a matrix $X\in M_{n\times k}(R)$, so that $AX=0$ and $FX=I_k.$

Set $T:=A+EF$. Then
$TX=E$ and $T(I_n-XF)=A.$
Hence, it follows that
$\im(T) =\im(A)+\im(E) =R^n.$
Thus, $T$ is surjective and hence there is some $S\in M_n(R)$ such that $TS=I_n$.
By stable finiteness of $R$, $T$ is invertible in $M_n(R)$.

If $v\in\ker(A)\cap\ker(F)$, then $Tv=0$, and hence $v=0$. Thus, $F|_{\ker(A)}$ is injective and therefore an isomorphism. This shows that (P1) holds.
Furthermore, $FX=I_k$ implies that $X$ is injective. Hence, $E=TX$ is injective, showing that (P3) holds.

Finally, suppose that $Au=Ec$ for some $u\in R^n$ and $c\in R^k$.
Since $A=T(I_n-XF)$, $E=TX$, and $T$ is invertible, we get that
$(I_n-XF)u=Xc$.
Applying $F$ and using $FX=I_k$ gives
$c=FXc=F(I_n-XF)u=0.$
Hence, $\im(A) \cap \im(E)= \{0\}$.
Together with our assumption, this shows that $R^n = \im(A) \oplus \im(E)$ and hence condition (P2) holds. 
\end{proof}

We are now ready to prove our second main result.

\begin{proof}[Proof of Theorem~\ref{thm:SecondMain}]
We first show the ``only if'' statement.
Suppose that $A+EDF \in \GL_n(R)$.
Then
$R^n=\im(A+EDF) \subseteq \im(A)+\im(E) \subseteq R^n,$
so (ii) holds.
Lemma~\ref{lem:dependencies} therefore gives (P1)--(P3), and Theorem~\ref{thm:Main} yields $D\in\GL_k(R)$, so (i) holds.

Now we show the ``if'' statement.
Suppose that (i) and (ii) hold. By Lemma~\ref{lem:dependencies}, conditions (P1)--(P3) hold.
Theorem~\ref{thm:Main} now gives $A+EDF \in\GL_n(R)$.
In both cases Theorem~\ref{thm:Main} yields the stated inversion formula.
\end{proof}

\begin{rem}\label{rem:SF}
Stable finiteness of $R$ is necessary in order for the equivalence in Theorem~\ref{thm:SecondMain} to hold 
for all matrix sizes and all matrices satisfying (S1).
Indeed, take any $n>0$ and suppose
that $P,Q \in M_n(R)$ satisfy $PQ=I_n$.
Set $k:=n$, $A:=0$, $E:=P$, $D:=I_n$, and $F:=I_n$.
Then (S1) holds and $R^n = \im(E)$, because $P$ is surjective.
Assuming that the equivalence of Theorem~\ref{thm:SecondMain} holds,
$P=A+EDF$ is invertible.
Using that $n$ was arbitrarily chosen we conclude that $R$ is stably finite.
\end{rem}

\section{Field coefficients}\label{Sec:Field}

In this section, we generalize and sharpen the main result of Eriksson and Nordqvist \cite[Thm.~2.1]{ErikssonNordqvist2024}.

\begin{cor}\label{cor:Field}
Let $\K$ be a field,
let $n\geq k \geq 1$ be integers, and let
$A\in M_n(\K)$,
$E\in M_{n\times k}(\K)$,
$D\in M_k(\K)$,
and $F\in M_{k\times n}(\K).$
Suppose that the following condition holds:
\begin{enumerate}
	\item[{\rm (C1)}] The rank of $A$ is $n-k$.
\end{enumerate}

\noindent Then $A+EDF \in \GL_n(\K)$
if and only if the following three assertions hold:
\begin{enumerate}[{\rm (i)}]
	\item $D \in \GL_k(\K)$.
	\item $\K^n$ is spanned by the column vectors of 
	$\begin{bmatrix}
	A & E 
	\end{bmatrix}.$
	\item $\K^n$ is spanned by the row vectors of 
	$\begin{bmatrix}
	A \\
	F
	\end{bmatrix}.$
\end{enumerate}
Moreover, when it exists, the inverse of
$A+EDF$ is given by 
$(A+EDF)^{-1}=G+XD^{-1}Y,$
where $G,X,Y$ are the matrices constructed in Lemma~\ref{lem:existenceGXY}.
\end{cor}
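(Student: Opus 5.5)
We reduce Corollary~\ref{cor:Field} to Theorem~\ref{thm:SecondMain}, using that a field is stably finite (for square matrices over $\K$, $XY=I_m$ implies $YX=I_m$ by dimension counting). Condition (ii) of the corollary says exactly that $\K^n=\im(A)+\im(E)$, i.e.\ assertion (ii) of Theorem~\ref{thm:SecondMain}. Condition (iii) says that the $(n+k)\times n$ matrix $\begin{bmatrix} A\\ F\end{bmatrix}$ has rank $n$, i.e.\ it is injective, i.e.\ $\ker(A)\cap\ker(F)=\{0\}$. So the work is to show that, under (C1), condition (iii) is equivalent to (S1), and then to assemble the pieces.

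The key step is linear algebra. By (C1) and rank--nullity, $\dim\ker(A)=k$. Hence $F|_{\ker(A)}:\ker(A)\to\K^k$ is a linear map between spaces of equal dimension $k$. It is therefore surjective if and only if it is injective if and only if it is an isomorphism. Injectivity of $F|_{\ker(A)}$ is precisely $\ker(A)\cap\ker(F)=\{0\}$, which is (iii). Thus, under (C1), (iii) $\iff$ (S1) $\iff$ (P1).

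Now we assemble the two directions. For ``if'', assume (i)--(iii). Then (S1) holds by the above, so Theorem~\ref{thm:SecondMain} applies and gives $A+EDF\in\GL_n(\K)$ together with the inverse formula. For ``only if'', assume $A+EDF$ is invertible. Then $\ker(A)\cap\ker(F)\subseteq\ker(A+EDF)=\{0\}$, which gives (iii) and hence (S1). Theorem~\ref{thm:SecondMain} then yields (i) and (ii), and the inverse formula is stated there as well.

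The only mildly delicate point is the ``only if'' direction. We cannot invoke Theorem~\ref{thm:SecondMain} until (S1) is known, so (iii) must be derived directly from invertibility via the kernel inclusion above. The rest of the argument is routine.
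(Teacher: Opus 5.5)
Your proof is correct. The ``if'' direction matches the paper's: (iii) gives $\ker(A)\cap\ker(F)=\{0\}$, and (C1) with rank--nullity gives $\dim\ker(A)=k$, so $F|_{\ker(A)}$ is an isomorphism and Theorem~\ref{thm:SecondMain} applies.

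The two arguments differ in the ``only if'' direction, where the paper does not use Theorem~\ref{thm:SecondMain} at all. It reads off (ii) and (iii) from the factorizations $A+EDF=\begin{bmatrix} A & E\end{bmatrix}\begin{bmatrix} I_n\\ DF\end{bmatrix}=\begin{bmatrix} I_n & ED\end{bmatrix}\begin{bmatrix} A\\ F\end{bmatrix}$ and the rank bound for products. It then gets (i) from subadditivity, $n\le\rank(A)+\rank(EDF)\le n-k+\rank(D)$.

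You instead obtain (iii) from $\ker(A)\cap\ker(F)\subseteq\ker(A+EDF)$, which is the kernel-side form of the second factorization. You turn (iii) into (S1) by the dimension count, and then let Theorem~\ref{thm:SecondMain} supply (i) and (ii).

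The paper's route is self-contained field linear algebra and gets invertibility of $D$ from a one-line rank estimate. Yours gets it through Lemma~\ref{lem:dependencies}, Theorem~\ref{thm:Main} and the rectangular Jacobson lemma. In exchange, your argument makes the corollary a uniform consequence of Theorem~\ref{thm:SecondMain} in both directions. It also shows clearly that, under (C1), the only field-specific input is the equivalence (iii) $\iff$ (S1) $\iff$ (P1).
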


\begin{proof}
We first show the ``only if'' statement.
Suppose that $A+EDF \in GL_n(\K)$.
The block factorizations
$A+EDF=
\begin{bmatrix} A & E \end{bmatrix}
\begin{bmatrix} I_n \\ DF \end{bmatrix}
$
and
$A+EDF=
\begin{bmatrix} I_n & ED \end{bmatrix}
\begin{bmatrix} A \\ F \end{bmatrix}
$
give, via e.g. \cite[Appendix 1]{Strang2023}, that
$n=\rank(A+EDF)
\leq \rank\left(\begin{bmatrix} A & E \end{bmatrix}\right)
\leq n$	
and
$n=\rank(A+EDF)
\leq \rank\left(\begin{bmatrix} A \\ F \end{bmatrix}\right)
\leq n,$
respectively.
Thus, (ii) and (iii) both hold.
Furthermore, by (C1) we have
$n=\rank(A+EDF)
\leq \rank(A)+\rank(EDF)
\leq n-k+\rank(D).$
It follows that $\rank(D)=k$, showing that (i) holds.

Now we show the ``if'' statement.
Suppose that (i), (ii) and (iii) hold.
We aim to verify the 
hypotheses of Theorem~\ref{thm:SecondMain}.
By (i), $D\in \GL_k(\K)$.
By (ii), we have $\K^n = \im(A) + \im(E)$.
By (iii), $\ker\left(\begin{bmatrix} A \\ F \end{bmatrix}\right) = \{0\}$,
and hence
$\ker (A)\cap\ker (F)
=
\ker\left(\begin{bmatrix} A \\ F \end{bmatrix}\right)
=\{0\}.$
Therefore, the map
$F|_{\ker (A)}:\ker(A) \to \K^k$
is injective. By the rank--nullity theorem and (C1),
$\dim(\ker (A))=n-\rank(A)=n-(n-k)=k.$
Hence, $F|_{\ker (A)}:\ker(A) \to \K^k$ is a vector space isomorphism.

Finally, $\K$ is a field and therefore stably finite. The desired conclusion, including the
inversion formula, now follows from Theorem~\ref{thm:SecondMain}.
\end{proof}

\begin{rem}\label{rem:Complex}
For a complex matrix $M$, we write $M^*$ for its conjugate transpose.

Suppose that $A\in M_n(\C)$ has rank $n-k$, $D\in M_k(\C)$ is invertible, $E\in M_{n\times k}(\C)$, and $F\in M_{k\times n}(\C)$.
Furthermore, suppose that
the columns of $A$ together with those of $E$ span $\C^n$, 
and that the columns of $A^*$ together with those of $F^*$ span $\C^n$.

Condition (C1) and assertions (i)--(ii) of Corollary~\ref{cor:Field} are plainly satisfied.
Using that \linebreak
$\rank\left(
\begin{bmatrix}
A^* & F^*
\end{bmatrix}
\right) = \rank
\left(
\begin{bmatrix}
A \\
F
\end{bmatrix}\right)$
we get that (iii) of Corollary~\ref{cor:Field} also holds.
By Corollary~\ref{cor:Field}, $A+EDF$ is invertible and its inverse has the desired shape. 
Hence, we recover the result from \cite[Thm.~2.1]{ErikssonNordqvist2024}.
\end{rem}


\begin{thebibliography}{99}

\bibitem{AkgunGarcelonHaftka2001}
M. A. Akg\"un, J. H. Garcelon, and R. T. Haftka,
Fast exact linear and non-linear structural reanalysis and the
Sherman--Morrison--Woodbury formulas,
\emph{Internat. J. Numer. Methods Engrg.} \textbf{50} (2001), no. 7, 1587--1606.

\bibitem{AlsacStottTinney1983}
O. Alsac, B. Stott, and W. F. Tinney,
Sparsity-oriented compensation methods for modified network solutions,
\emph{IEEE Trans. Power Appar. Syst.} \textbf{PAS-102} (1983), no. 5, 1050--1060.

\bibitem{Anderson1976}
O. D. Anderson,
On the inverse of the autocovariance matrix for a general moving average process,
\emph{Biometrika} \textbf{63} (1976), no. 2, 391--394.

\bibitem{DennisMore1977}
J. E. Dennis, Jr., and J. J. Mor\'e,
Quasi-Newton methods, motivation and theory,
\emph{SIAM Rev.} \textbf{19} (1977), no. 1, 46--89.

\bibitem{ErikssonNordqvist2024}
S. Eriksson and J. Nordqvist, 
Inverting the sum of two singular matrices,
\emph{Results Appl. Math.} \textbf{22} (2024), Paper No. 100463, 5 pp.

\bibitem{FillFishkind2000}
J. A. Fill and D. E. Fishkind,
The Moore-Penrose generalized inverse for sums of matrices,
\emph{SIAM J. Matrix Anal. Appl.} \textbf{21} (2000), no. 2, 629--635.

\bibitem{Hager1989}
W. W. Hager, 
Updating the inverse of a matrix,
\emph{SIAM Rev.} \textbf{31} (1989), no. 2, 221--239.

\bibitem{Halmos1982}
P. R. Halmos, 
Does mathematics have elements?,
\emph{Bull. Austral. Math. Soc.} \textbf{25} (1982), no. 2, 161--175.

\bibitem{HendersonSearle1981}
H. V. Henderson and S. R. Searle, 
On deriving the inverse of a sum of matrices,
\emph{SIAM Rev.} \textbf{23} (1981), no. 1, 53--60.

\bibitem{Lam2003}
T. Y. Lam, 
\emph{Exercises in classical ring theory},
Second edition, Probl. Books in Math.,
Springer-Verlag, New York (2003).

\bibitem{Miller1981}
K. S. Miller, 
On the inverse of the sum of matrices,
\emph{Math. Mag.} \textbf{54} (1981), no. 2, 67--72.

\bibitem{ProskurowskiWidlund1976}
W. Proskurowski and O. Widlund,
On the numerical solution of Helmholtz's equation by the capacitance matrix method,
\emph{Math. Comp.} \textbf{30} (1976), 433--468.

\bibitem{Riedel1992}
K. S. Riedel,
A Sherman--Morrison--Woodbury identity for rank augmenting matrices with application to centering,
\emph{SIAM J. Matrix Anal. Appl.} \textbf{13} (1992), no. 2, 659--662.

\bibitem{ShermanMorrison1950}
J. Sherman and W. J. Morrison, 
Adjustment of an inverse matrix corresponding to a change in one element of a given matrix,
\emph{Ann. Math. Statistics} \textbf{21} (1950), 124--127.

\bibitem{Strang2023}
G. Strang,
\emph{Introduction to Linear Algebra},
6th ed., Wellesley-Cambridge Press (2023).

\bibitem{Woodbury1950}
M. A. Woodbury,
\emph{Inverting Modified Matrices}, Memorandum Report 42,
Statistical Research Group, Princeton University, Princeton, NJ (1950).

\end{thebibliography}
\end{document}